\documentclass[12pt]{article}
\usepackage{amssymb,amsfonts,amsmath,latexsym,epsf,url}
\usepackage[usenames,dvipsnames]{pstricks}
\usepackage{pstricks-add}
\usepackage{subfigure}
\usepackage{graphicx}
\usepackage{color,soul}
\usepackage{epsfig}
\usepackage{tikz}
\usepackage[colorlinks]{hyperref}
\usepackage{cite}
\usepackage{algorithm}
\usepackage{algpseudocode}
\usepackage{graphicx}
\usepackage{booktabs}
\usepackage{multirow}

\newtheorem{theorem}{Theorem}[section]

\newtheorem{proposition}[theorem]{Proposition}
\newtheorem{observation}[theorem]{Observation}

\newtheorem{lemma}[theorem]{Lemma}

\newtheorem{example}[theorem]{Example}

\newcommand{\qed}{\hfill $\square$\medskip}

\begin{document}

\def\nt{\noindent}

%%%%%%%%%%%%%%%%%%%%%%%%%%%%%%%%%%%%%%%%%%%%%%%%%%%%%%%%%%%%%%%%%%%%%%%%%%%%%%%%%
%%%%%%%%%%%%%%%%%===== Title=======%%%%%%%%%%%%%%%%%%%%%%%%%%%%%%%%%%%%%%%%%%%%%%
\title{ The partition dimension and $k$-domination number of a family of non-distance regular graph}

\author{Ali Zafari$^1$\footnote{Corresponding author} \and	Saeid Alikhani$^{2}$}

%\date{\today}

\maketitle

\begin{center}

$^1$Department of Mathematics, Faculty of Science,
Payame Noor University, P.O. Box 19395-4697, Tehran, Iran\\ 
{\tt zafari.math@pnu.ac.ir}
\medskip

$^{2}$Department of Mathematical Sciences, Yazd University, 89195-741, Yazd, Iran\\
{\tt alikhani@yazd.ac.ir}

\end{center}
	
%%%%%%%%%%%%%%%%%%%%%%%%%%%%%%%%%%%%%%%%%%%%%%%%%%%%%%%%%%%%%%%%%%%%%%%%%%%%%%%%%
%%%%%%%%%%%%%%%%%===== Abstract=======%%%%%%%%%%%%%%%%%%%%%%%%%%%%%%%%%%%%%%%%%%%

\begin{abstract}
	A partition $\Sigma = \{S_1, S_2, \dots, S_k\}$ of the vertex set $V(G)$ is a resolving partition if every pair of distinct vertices in $G$ has a
	 unique representation relative to $\Sigma$. The partition dimension, $pd(G)$, is the minimum cardinality of such a partition. Additionally, a 
	 subset $D \subseteq V(G)$ is a $k$-dominating set if every vertex in $V(G) \setminus D$ has at least $k$ neighbors in $D$; the $k$-domination number, 
	 $\gamma_k(G)$, denotes the minimum size of such a set. Determining these parameters is NP-complete and particularly challenging for non-distance-regular graphs. 
	 This paper consider the Toeplitz graph $T_{2n}(W)$, a family of non-distance-regular graphs. While some resolving parameters for this family have been established, its partition dimension and $k$-domination number remain unknown. We close this gap by computing both parameters for $T_{2n}(W)$.
\end{abstract}

\noindent{\bf Keywords:} Resolving set, Partition dimension, Domination number.
  
\medskip
\noindent{\bf AMS Subj.\ Class.:}  05C15, 05C50.

%%%%%%%%%%%%%%%%%%%%%%%%%%%%%%%%%%%%%%%%%%%%%%%%%%%%%%%%%%%%%%%%%%%%%%%%%%%%%%%%%
%%%%%%%%%%%%%%%%%===== Introduct=======%%%%%%%%%%%%%%%%%%%%%%%%%%%%%%%%%%%%%%%%%%
\section{Introduction}
\label{sec:introduction}

	The distance $d_G(u, v)$ between two vertices $u$ and $v$ in a connected graph $G$ is defined as the length of a shortest path between them. A vertex $v \in V(G)$ is said to \textit{distinguish} (or resolve) two vertices $x$ and $y$ if $d_G(v, x) \neq d_G(v, y)$. A subset $S = \{v_1, \dots, v_t\} \subseteq V(G)$ is a \textit{resolving set} of $G$ if, for every $v \in V(G)$, the distance vector $D(v|S) = (d_G(v, v_1), \dots, d_G(v, v_t))$ uniquely identifies $v$. The \textit{metric dimension} of $G$, denoted by $\dim(G)$, is the minimum cardinality of a resolving set. This parameter has been extensively studied in the literature.
	
	A fundamental problem in resolving set theory is determining the \textit{partition dimension}, which generalizes the resolving set concept by classifying vertices into partition classes. This notion was introduced by Chartrand et al. \cite{G.Chartrand} and stems from the study of metric dimension pioneered by Harary and Melter \cite{F.Harary} and independently by Slater \cite{P.J.Slater}. 
	
	A partition $\Sigma = \{S_1, S_2, \dots, S_k\}$ of $V(G)$ is a \textit{resolving partition} if every pair of distinct vertices $u, v \in V(G)$ has distinct representations relative to $\Sigma$. The partition dimension $pd(G)$ is the minimum cardinality of such a partition. Finding $pd(G)$ is an NP-complete problem \cite{M.R.Garey} with diverse applications in chemistry for modeling compounds \cite{M.A.Johnson.1, M.A.Johnson.2}, pattern recognition, image processing \cite{R.A.Melter}, and network verification \cite{Z.Beerliova, J.Caceres, V.Chvatal}. Recent work has also explored the partition dimension of corona product graphs \cite{J.A.Rod-1} and some families of trees\cite{Ketut}.
	
	Domination is another fundamental concept in graph theory, introduced by Ore in 1962 \cite{O.Ore}, largely influenced by games and recreational mathematics. A subset $D \subseteq V(G)$ is a \textit{dominating set} if every vertex in $V(G) \setminus D$ is adjacent to at least one element in $D$. Borowiecki and Kuzak \cite{M.Borowiecki} generalized this to $k$-domination: a subset $D \subseteq V(G)$ is \textit{$k$-dominating} if every vertex in $V(G) \setminus D$ has at least $k$ neighbors in $D$. By definition, for $k \geq 1$, every $(k+1)$-dominating set is also a $k$-dominating set.
	
	\section*{The Toeplitz Graph $T_{2n}(W)$}
	
	Let $n \ge 4$ be an even integer and $[x_{2n}] = \{x_1, x_2, \dots, x_{2n}\}$ be the vertex labels. Define $W_1 = \{x_1, x_3, \dots, x_{2n-1}\}$ as the set of odd indices and $W_2 = \{x_n\}$. Let $W = W_1 \cup W_2$. The Toeplitz graph $T_{2n}(W)$ is a graph where two vertices $x_i, x_j$ are adjacent if $|j - i| = t$ for some $x_t \in W$ \cite{J.B-1}. Equivalently, $T_{2n}(W)$ consists of odd vertices $V_1$ and even vertices $V_2$, forming an $(n+1)$-regular graph with the following properties:
	\begin{itemize}
		\item Since all odd values are in $W$, $T_{2n}(W)$ is a complete bipartite-like structure where every vertex in $V_1$ is adjacent to every vertex in $V_2$.
		\item Each vertex $x \in V_1$ has a unique true twin $x+n \in V_1$ (similarly for $V_2$), meaning they share identical neighborhoods.
		\item $T_{2n}(W)$ is isomorphic to the Cayley graph $\text{Cay}(\mathbb{D}_{2n}, \Psi)$ over the dihedral group $\mathbb{D}_{2n}$, where $\Psi = \{ab, a^2b, \dots, a^{n-1}b, b\} \cup \{a^{n/2}\}$ is an inverse-closed subset of $\mathbb{D}_{2n} \setminus \{1\}$.
	\end{itemize}
	
	While the partition dimension and $k$-domination number of distance-regular graphs, such as cocktail party graphs $CP(m+1)$, have been determined \cite{A.Zafari-1}, the problem is significantly more complex for non-distance-regular graphs. According to \cite{J.B-1}, $T_{2n}(W)$ is not distance-regular. Although its metric dimension and integrality are known, its partition dimension and $k$-domination number have remained undetermined. In this paper, we compute these two parameters for the Toeplitz graph $T_{2n}(W)$.

%%%%%%%%%%%%%%%%%%%%%%%%%%%%%%%%%%%%%%%%%%%%%%%%%%%%%%%%%%%%%%%%%%%%%%%%%%%%%%%%%
%%%%%%%%%%%%%%%%%===== Defin and Plem=======%%%%%%%%%%%%%%%%%%%%%%%%%%%%%%%%%%%%%
\section{Definitions and Preliminaries}

This section provides the foundational concepts, notation, and established results from graph and algebraic graph theory necessary for the subsequent analysis. For terminology and notation not explicitly defined herein, we refer the reader to \cite{Biggs-1, Godsil-1}.

Throughout this paper, we consider $G$ to be a simple connected graph with vertex set $V(G)$ and edge set $E(G)$. The \textit{adjacency matrix} of $G$, denoted by $A(G) = [a_{ij}]$, is a $0$-$1$ matrix of order $n = |V(G)|$ where $a_{ij} = 1$ if vertices $i$ and $j$ are adjacent, and $a_{ij} = 0$ otherwise. The eigenvalues of $A(G)$ constitute the \textit{spectrum} of $G$. A graph is said to be \textit{integral} if its spectrum consists entirely of integers \cite{Harary-1}.

For any vertex $v \in V(G)$, the \textit{open neighborhood} $N(v)$ is the set of vertices adjacent to $v$. The \textit{closed neighborhood} is defined as $N[v] = N(v) \cup \{v\}$. Two distinct vertices $u$ and $v$ are called \textit{true twins} if they share the same closed neighborhood, i.e., $N[u] = N[v]$.

A matrix $T = [t_{ij}]$ of order $n$ is a \textit{Toeplitz matrix} if $t_{ij} = t_{i+1, j+1}$ for all $1 \le i, j \le n-1$ \cite{book-P.Halmos-1}. Such a matrix is characterized by constant entries along its diagonals parallel to the main diagonal. A simple undirected graph $\Gamma$ with vertex set $\{1, \dots, n\}$ is a \textit{Toeplitz graph} if its adjacency matrix is a Toeplitz matrix.

A regular graph $G$ with diameter $d$ is \textit{distance-regular} if, for any two vertices $u$ and $v$ at distance $r = d(u,v)$, the following intersection numbers depend only on $r$ and are independent of the choice of $u$ and $v$:
\begin{itemize}
	\item $c_r = |N(u) \cap \Gamma_{r-1}(v)|,$
	\item $b_r = |N(u) \cap \Gamma_{r+1}(v)|,$
\end{itemize}
where $\Gamma_k(v)$ denotes the set of vertices at distance $k$ from $v$.

 The collection $\{\Gamma_0(v), \Gamma_1(v), \dots, \Gamma_d(v)\}$ is referred to as the \textit{distance partition} of $G$ with respect to $v$.

Let $G$ be a finite group with identity $1$, and let $\Omega \subseteq G \setminus \{1\}$ be an inverse-closed subset (i.e., $x \in \Omega \implies x^{-1} \in \Omega$). The \textit{Cayley graph} $\Gamma = \operatorname{Cay}(G, \Omega)$ is the graph with vertex set $V(\Gamma) = G$, where two vertices $g$ and $h$ are adjacent if and only if $g^{-1}h \in \Omega$.

%%%%%%%%%%%%%%%%%%%%%%%%%%%%%%%%%%%%%%%%%%%%%%%%%%%%%%%%%%%%%%%%%%%%%%%%%%%%%%%%%
%%%%%%%%%%%%%%%%%===== Theor 2.1=======%%%%%%%%%%%%%%%%%%%%%%%%%%%%%%%%%%%%%%%%%%
\begin{theorem}\label{b.1}~{\rm\cite{G.Chartrand}}
If $G$ is a nontrivial connected graph, then $pd(G)\leq dim(G)+1$.
\end{theorem}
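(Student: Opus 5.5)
Starting from a metric basis, I will turn it into a resolving partition with one more class. Let $W=\{w_1,\dots,w_t\}$ be a resolving set of $G$ with $t=\dim(G)$. Since $G$ is nontrivial and connected, $t\ge 1$. I will define the partition
\[
\Sigma=\{S_1,\dots,S_t,S_{t+1}\},\qquad S_i=\{w_i\}\ (1\le i\le t),\qquad S_{t+1}=V(G)\setminus W.
\]
If $V(G)\setminus W$ is empty, I simply drop $S_{t+1}$. Either way $|\Sigma|\le t+1$, so it suffices to show that $\Sigma$ is a resolving partition.

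The key observation is that for a singleton class, $d(v,S_i)=d(v,w_i)$. Hence the first $t$ coordinates of $r(v|\Sigma)$ are exactly the coordinates of the distance vector $D(v|W)$.

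Now take distinct vertices $u,v$. I will argue in two cases.
\begin{itemize}
\item If $u$ and $v$ lie in different classes of $\Sigma$, say $u\in S_j$, then $d(u,S_j)=0$ while $d(v,S_j)>0$. So their representations differ in coordinate $j$.
\item If $u$ and $v$ lie in the same class, that class must be $S_{t+1}$, since the other classes are singletons. Because $W$ is resolving, $D(u|W)\neq D(v|W)$, so some coordinate $i\le t$ of their representations differs.
\end{itemize}
In both cases $r(u|\Sigma)\neq r(v|\Sigma)$. Therefore $\Sigma$ is resolving and $pd(G)\le|\Sigma|\le\dim(G)+1$.

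There is no real obstacle here. The only points needing care are that the sets $S_i$ are nonempty and pairwise disjoint, which holds since the $w_i$ are distinct, and the degenerate case $W=V(G)$, which only makes the bound easier.
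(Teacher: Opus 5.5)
Your proof is correct and complete: taking the singletons $\{w_i\}$ of a metric basis together with the class $V(G)\setminus W$, your two-case argument (a zero coordinate for vertices in different classes, the resolving property of $W$ for vertices in the same class) shows the partition is resolving. The paper gives no proof of this statement and only cites Chartrand et al.; your construction is the standard argument for that cited result.
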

%%%%%%%%%%%%%%%%%%%%%%%%%%%%%%%%%%%%%%%%%%%%%%%%%%%%%%%%%%%%%%%%%%%%%%%%%%%%%%%%%
%%%%%%%%%%%%%%%%%===== Obser 2.2=======%%%%%%%%%%%%%%%%%%%%%%%%%%%%%%%%%%%%%%%%%%
\begin{observation}\label{b.2}~{\rm\cite{J.B-1}}
For the Toeplitz graph $T_{2n}(W)$ with even $n \ge 4$, 
the adjacency matrix spectrum of $T_{2n}(W)$ is $n+1, 1-n, 1^{(n-2)}, -1^{(n)}$.
\end{observation}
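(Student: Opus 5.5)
The plan is to compute the adjacency matrix of $T_{2n}(W)$ in block form and diagonalize it with a Kronecker-product decomposition. I first pin down the adjacency rule, order the vertices as $V_1$ (odd indices) followed by $V_2$ (even indices), and then reduce the spectrum of the $2n\times 2n$ matrix to the spectra of two commuting $n\times n$ blocks.

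\textbf{Step 1: adjacency structure.} Vertices $x_i,x_j$ are adjacent iff $|i-j|$ is odd or $|i-j|=n$.
\begin{itemize}
\item If $i,j$ have different parity, then $|i-j|$ is odd, so every vertex of $V_1$ is adjacent to every vertex of $V_2$.
\item If $i,j$ have the same parity, then $|i-j|$ is even. Since $n$ is even, they are adjacent exactly when $|i-j|=n$.
\end{itemize}
Thus inside $V_1$ (and likewise inside $V_2$) the edges form the perfect matching $x_i\leftrightarrow x_{i+n}$ for $1\le i\le n$. This matching has $n/2$ edges in each part, and $i+n$ has the same parity as $i$. Writing $M$ for the $n\times n$ adjacency matrix of this matching and $J$ for the all-ones $n\times n$ matrix,
\[
A=\begin{pmatrix} M & J\\ J & M\end{pmatrix}=I_2\otimes M+P\otimes J,\qquad P=\begin{pmatrix}0&1\\1&0\end{pmatrix}.
\]
In particular the graph is $(n+1)$-regular.

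\textbf{Step 2: spectrum of $M$ and commutation.} $M$ is a direct sum of $n/2$ copies of $\begin{pmatrix}0&1\\1&0\end{pmatrix}$, so its spectrum is $1^{(n/2)},-1^{(n/2)}$. The all-ones vector $\mathbf 1$ is an eigenvector of $M$ for the eigenvalue $1$. Since $M$ is $1$-regular, $MJ=JM=J$, so $M$ and $J$ commute. We can therefore choose an orthonormal eigenbasis of $M$ consisting of $\mathbf 1/\sqrt n$ together with $n-1$ vectors orthogonal to $\mathbf 1$. On the complement $\mathbf 1^\perp$, $M$ has eigenvalue $1$ with multiplicity $n/2-1$ and $-1$ with multiplicity $n/2$.

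\textbf{Step 3: eigenvectors of $A$.} Let $u\in\{(1,1)^T,(1,-1)^T\}$, so that $Pu=\pm u$.
\begin{itemize}
\item For $v=\mathbf 1$ we have $Jv=nv$ and $Mv=v$, so
\[
A(u\otimes v)=(1\pm n)(u\otimes v).
\]
This gives the eigenvalues $n+1$ and $1-n$.
\item For $v\perp\mathbf 1$ with $Mv=\lambda v$ we have $Jv=0$, so $A(u\otimes v)=\lambda (u\otimes v)$. Each such $\lambda$ therefore appears twice, once for each choice of $u$. This yields $1$ with multiplicity $2(n/2-1)=n-2$ and $-1$ with multiplicity $2\cdot n/2=n$.
\end{itemize}
These $2n$ vectors are linearly independent, being tensor products of two bases. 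Hence the spectrum is exactly $n+1,\,1-n,\,1^{(n-2)},\,-1^{(n)}$. As a check, the multiplicities sum to $2n$ and the trace is $(n+1)+(1-n)+(n-2)-n=0$.

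\textbf{Main obstacle.} The linear algebra in Steps 2 and 3 is routine. The only delicate point is Step 1, which must use that $n$ is even, so that the difference $n$ preserves parity. This is what makes the graph exactly $K_{n,n}$ plus a perfect matching inside each part. Note that the paired vertices $x_i,x_{i+n}$ are adjacent and, apart from each other, have identical neighbourhoods. I would state this carefully rather than rely on the informal ``twin'' description in the introduction.
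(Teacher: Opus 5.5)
Your proof is correct. Note that the paper does not prove this observation at all; it quotes it from the cited work on Toeplitz graphs. Your argument therefore supplies a self-contained derivation that the paper lacks.

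Every step checks:
\begin{itemize}
\item The parity argument uses the evenness of $n$ exactly where it is needed.
\item Both diagonal blocks really are the same matrix $M$ under the orderings $x_1,x_3,\dots,x_{2n-1}$ and $x_2,x_4,\dots,x_{2n}$. In each part, position $k$ is matched with position $k+n/2$ for $k\le n/2$. You use this implicitly and could state it.
\item The identity $MJ=JM=J$ lets you split $\mathbb{R}^n=\langle\mathbf 1\rangle\oplus\mathbf 1^{\perp}$ into $M$-invariant pieces.
\item Tensoring with $(1,\pm1)^T$ gives $1+1+(n-2)+n=2n$ independent eigenvectors.
\end{itemize}
Beyond the spectrum, your Step 1 rigorously establishes structural facts the rest of the paper uses informally: $V_1$ is complete to $V_2$, the graph is $(n+1)$-regular, and each part carries the matching $x_i\leftrightarrow x_{i+n}$.

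On your closing caution: the property you describe, $N[x_i]=N[x_{i+n}]$, is exactly the paper's definition of true twins in Section 2. So the introduction's claim is accurate once ``identical neighborhoods'' is read as closed neighborhoods.

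An even shorter route uses the complement. The complement of $T_{2n}(W)$ is two disjoint copies of $K_n$ minus a perfect matching, each with spectrum $n-2,\,0^{(n/2)},\,(-2)^{(n/2-1)}$. This complement is $(n-2)$-regular on $2n$ vertices. So the eigenvalues of $T_{2n}(W)$ are $2n-1-(n-2)=n+1$ together with $-1-\theta$ for each remaining eigenvalue $\theta$, which gives $1-n$, $(-1)^{(n)}$ and $1^{(n-2)}$. Your Kronecker approach has the advantage of exhibiting explicit eigenvectors, while the complement route reduces everything to a known spectrum.
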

%%%%%%%%%%%%%%%%%%%%%%%%%%%%%%%%%%%%%%%%%%%%%%%%%%%%%%%%%%%%%%%%%%%%%%%%%%%%%%%%%
%%%%%%%%%%%%%%%%%===== Theor 2.3=======%%%%%%%%%%%%%%%%%%%%%%%%%%%%%%%%%%%%%%%%%%
\begin{theorem}\label{b.3}~{\rm\cite{J.B-1}}
For the Toeplitz graph $T_{2n}(W)$ with even $n \ge 4$, the metric dimension of $T_{2n}(W)$ is $n$.
\end{theorem}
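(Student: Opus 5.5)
The plan is to exploit the twin structure of $T_{2n}(W)$ directly, obtaining the lower bound $\dim(T_{2n}(W))\ge n$ from twin pairs and the upper bound from an explicit resolving set of size $n$.

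First I pin down the adjacency. For $x_i,x_j$ adjacent we need $|i-j|$ odd or $|i-j|=n$. Since $n$ is even, $|i-j|=n$ joins vertices of the same parity, while odd differences join vertices of opposite parity. Hence $T_{2n}(W)$ is $K_{n,n}$ between $V_1$ (odd indices) and $V_2$ (even indices), together with the perfect matching $x_i\sim x_{i+n}$ ($1\le i\le n$) inside each part. Consequently the diameter is $2$. For odd $i\le n$ we have
\[
N[x_i]=V_2\cup\{x_i,x_{i+n}\}=N[x_{i+n}],
\]
and symmetrically for even $i$. So the $2n$ vertices split into exactly $n$ true-twin pairs $\{x_i,x_{i+n}\}$.

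\textbf{Lower bound.} If $u,v$ are true twins, then every $w\notin\{u,v\}$ satisfies $d(w,u)=d(w,v)$. Thus any resolving set must contain at least one vertex from each of the $n$ disjoint twin pairs, which gives $\dim\ge n$.

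\textbf{Upper bound.} I take $S=\{x_1,\dots,x_n\}$, one vertex from each pair. Since $n$ is even, $S$ contains $n/2\ge2$ odd and $n/2$ even vertices. Every vertex of $S$ is resolved by its own zero coordinate.

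For $v=x_{i+n}\notin S$, the distances to $S$ are as follows:
\begin{itemize}
\item $d(v,x_i)=1$, since $x_i$ is its twin;
\item $d(v,x_j)=2$ for the other $x_j\in S$ of the same parity;
\item $d(v,x_j)=1$ for the $x_j\in S$ of opposite parity.
\end{itemize}

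Now compare two vertices $x_{i+n}\ne x_{j+n}$ outside $S$. If they have opposite parity, their sets of coordinates equal to $1$ differ on the odd versus even positions. If they have the same parity, then coordinate $i$ is $1$ for the first and $2$ for the second. Hence all representations are distinct, and $\dim=n$.

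The only delicate point is the parity bookkeeping: it requires $n$ even, so that the shift by $n$ preserves parity and $S$ meets both parts. With that checked, the remaining steps are immediate.
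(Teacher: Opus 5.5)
Your proof is correct. The paper gives no proof of this statement: it quotes it from \cite{J.B-1} as a preliminary. So there is no internal argument to compare against, and yours makes the result self-contained.

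Your description of $T_{2n}(W)$ as $K_{n,n}$ between $V_1$ and $V_2$, plus the perfect matching $x_i\sim x_{i+n}$ inside each class, is exactly what the paper's bullet list asserts. The lower bound from the $n$ disjoint true-twin pairs $\{x_i,x_{i+n}\}$ uses the same mechanism the paper later relies on in Lemma~\ref{m.1}: twins cannot be separated by any third vertex.

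Your upper-bound set $S=\{x_1,\dots,x_n\}$ works. For instance, when $n=4$ the vectors of $x_5,x_7,x_6,x_8$ are $(1,1,2,1)$, $(2,1,1,1)$, $(1,1,1,2)$ and $(1,2,1,1)$, all distinct.

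One sentence should be tightened. In the opposite-parity case, ``their sets of coordinates equal to $1$ differ on the odd versus even positions'' is too vague. State it explicitly:
\begin{itemize}
\item for odd $x_{i+n}$, the $1$-coordinates include all $n/2\ge 2$ even positions of $S$;
\item for even $x_{j+n}$, the $1$-coordinates include only the single even position $j$.
\end{itemize}
This is precisely where $n\ge 4$ is needed, not merely $n$ even. For $n=2$ the graph is $K_4$, and $x_3$ and $x_4$ both get the vector $(1,1)$ with respect to $\{x_1,x_2\}$.
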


\section{Main results}
%%%%%%%%%%%%%%%%%%%%%%%%%%%%%%%%%%%%%%%%%%%%%%%%%%%%%%%%%%%%%%%%%%%%%%%%%%%%%%%%%
%%%%%%%%%%%%%%%%%===== lemma 3.1=====%%%%%%%%%%%%%%%%%%%%%%%%%%%%%%%%%%%%%%%%%%%%
\begin{lemma}\label{m.1}
	For any resolving partition $\Sigma = \{S_1, \dots, S_k\}$ of $V(T_{2n}(W))$, any two adjacent vertices in $V_1$ (or $V_2$) must belong to distinct parts of $\Sigma$.
\end{lemma}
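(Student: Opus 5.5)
The plan is to show that two adjacent vertices inside $V_1$ (or inside $V_2$) are true twins, and that true twins can never share a part of a resolving partition.

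\emph{Step 1: identify the edges inside $V_1$ and $V_2$.} Vertices $x_i,x_j$ are adjacent exactly when $|i-j|$ is odd or $|i-j|=n$. If $x_i,x_j\in V_1$ (or both in $V_2$), then $|i-j|$ is even. So they are adjacent if and only if $|i-j|=n$, using that $n$ is even. Hence the edges inside $V_1$ form the perfect matching $x_i \leftrightarrow x_{i+n}$, and the same holds inside $V_2$. Every vertex of $V_1$ is adjacent to every vertex of $V_2$, since odd minus even is odd. Therefore, for adjacent $u=x_i$ and $v=x_{i+n}$ in $V_1$,
$$N[u]=N[v]=V_2\cup\{u,v\},$$
so $u$ and $v$ are true twins. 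The symmetric statement holds in $V_2$.

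\emph{Step 2: true twins are metrically indistinguishable from outside.} For any vertex $w\notin\{u,v\}$, I claim $d(u,w)=d(v,w)$. Take a shortest $u$--$w$ path and let $u'$ be its second vertex. Then $u'\in N[u]=N[v]$, so either $u'=v$ or $u'$ is a neighbour of $v$.
\begin{itemize}
\item If $u'=v$, we get $d(v,w)=d(u,w)-1$. Running the same argument from $v$ gives $d(u,w)\le d(v,w)$, a contradiction, so this case cannot occur.
\item If $u'$ is a neighbour of $v$, then $d(v,w)\le d(u,w)$. By symmetry $d(u,w)\le d(v,w)$, so the two distances are equal.
\end{itemize}
Here the diameter is $2$, so one could also simply check the distances directly.

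\emph{Step 3: conclude.} Suppose, for a contradiction, that $u,v$ lie in the same part $S_j$ of a resolving partition $\Sigma$. Then $d(u,S_j)=d(v,S_j)=0$. For every other part $S_l$, we have $u,v\notin S_l$, so Step 2 gives $d(u,S_l)=\min_{w\in S_l}d(u,w)=\min_{w\in S_l}d(v,w)=d(v,S_l)$. Thus $r(u|\Sigma)=r(v|\Sigma)$ with $u\neq v$, contradicting that $\Sigma$ is resolving.

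There is no real obstacle here. The only point needing care is Step 1: checking that the parity condition ($n$ even) forces the within-part adjacencies to be exactly the twin pairs, so that "adjacent in $V_1$" coincides with "true twins".
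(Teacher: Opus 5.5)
Your proof is correct and follows the same route as the paper: two adjacent vertices within $V_1$ (or $V_2$) are true twins, so they are equidistant from every other vertex, and placing them in the same part would give them identical representations. Your Step 1 carries out the verification the paper only asserts with ``it can be verified'', and it correctly states the twin property as $N[u]=N[v]$ where the paper loosely says ``identical neighborhoods''; your Step 2 likewise proves the equidistance claim that the paper states without argument.
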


\begin{proof}
	Recall that $V(T_{2n}(W)) = V_1 \cup V_2$, where $V_1 = \{x_1, x_3, \dots, x_{2n-1}\}$ and $V_2 = \{x_2, x_4, \dots, x_{2n}\}$ represent the sets of odd and even indexed vertices, respectively. Let $\Sigma = \{S_1, \dots, S_k\}$ be a resolving partition of $V(T_{2n}(W))$.
	
	Suppose $x_i, x_j \in V_1$ are adjacent vertices (the proof for $V_2$ is analogous). It can be verified that $x_i$ and $x_j$ possess identical neighborhoods in $T_{2n}(W)$, identifying them as true twins. Consequently, for any vertex $x_t \in V(T_{2n}(W)) \setminus \{x_i, x_j\}$, the distances satisfy $d(x_i, x_t) = d(x_j, x_t)$.
	
	Assume, for the sake of contradiction, that $x_i$ and $x_j$ belong to the same part $S_l \in \Sigma$. Then, for every part $S_t \in \Sigma$, the distance from each vertex to the part is identical, i.e., $d(x_i, S_t) = d(x_j, S_t)$. Specifically, for $t = l$, we have $d(x_i, S_l) = d(x_j, S_l) = 0$. This implies that the representations $r(x_i | \Sigma)$ and $r(x_j | \Sigma)$ are identical, which contradicts the definition of $\Sigma$ as a resolving partition. Thus, adjacent vertices in $V_1$ (or $V_2$) must be placed in distinct parts of $\Sigma$. \qed
\end{proof}

%%%%%%%%%%%%%%%%%%%%%%%%%%%%%%%%%%%%%%%%%%%%%%%%%%%%%%%%%%%%%%%%%%%%%%%%%%%%%%%%%

\begin{proposition}\label{m.1.1}
	Let $T_{2n}(W)$ be the Toeplitz graph with $n \ge 4$ even, and let $\Sigma = \{S_1, \dots, S_k\}$ be a partition of $V(T_{2n}(W))$. If there exists a part $S_l \in \Sigma$ such that $S_l \subseteq V_1$ (or $S_l \subseteq V_2$) and $|S_l| > \frac{n}{2}$, then $\Sigma$ is not a resolving partition.
\end{proposition}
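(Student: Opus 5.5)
The plan is a pigeonhole argument: I will show that any subset of $V_1$ (or of $V_2$) with more than $\frac{n}{2}$ vertices must contain two adjacent vertices of the same class, and then apply Lemma~\ref{m.1} to conclude that $\Sigma$ cannot be resolving.

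The first step is to describe the subgraph induced on $V_1$ explicitly. By the definition of $T_{2n}(W)$, $x_i$ and $x_j$ are adjacent exactly when $|j-i|$ is odd or $|j-i|=n$. Take two vertices in $V_1$. Both indices are odd, so $|j-i|$ is even, and they are adjacent if and only if $|j-i|=n$. Since $n$ is even, $x_i$ and $x_{i+n}$ have the same parity. Hence the induced subgraph on $V_1$ is a perfect matching with the $\frac{n}{2}$ edges $\{x_i,x_{i+n}\}$, where $i\in\{1,3,\dots,n-1\}$.

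The same holds for $V_2$, with $i\in\{2,4,\dots,n\}$. As a consistency check, in each such pair $N[x_i]=N[x_{i+n}]=V_2\cup\{x_i,x_{i+n}\}$ for odd $i$. So these are exactly the true twins used in the proof of Lemma~\ref{m.1}.

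The second step is the pigeonhole argument. Suppose $S_l\subseteq V_1$ with $|S_l|>\frac{n}{2}$. The $n$ vertices of $V_1$ split into $\frac{n}{2}$ disjoint pairs $\{x_i,x_{i+n}\}$. Therefore $S_l$ must contain both vertices of at least one pair, so $S_l$ contains two adjacent vertices of $V_1$. By Lemma~\ref{m.1}, adjacent vertices of $V_1$ must lie in distinct parts of any resolving partition, so $\Sigma$ is not resolving.

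One could also argue directly: both vertices of the pair have distance $0$ to $S_l$ and equal distance to every other part, so their representations coincide. The case $S_l\subseteq V_2$ is identical.

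The only real point of care is the first step, namely the exact determination of adjacency inside $V_1$. I will check that no other even difference belongs to the index set of $W$; this holds because the only even element of $W$ is $x_n$. I will also check that $i\mapsto i+n$ is a well-defined fixed-point-free pairing of $V_1$, which holds since $1\le i\le n-1$ gives $i+n\le 2n-1$. Once this matching structure is established, the rest is immediate.
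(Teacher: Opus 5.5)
Your proof is correct and takes the same route as the paper's. The paper likewise notes that an independent set in the subgraph induced by $V_1$ (or $V_2$) has at most $\frac{n}{2}$ vertices, applies pigeonhole to find an adjacent twin pair inside $S_l$, and invokes Lemma~\ref{m.1}; your explicit check that this subgraph is the perfect matching $\{x_i,x_{i+n}\}$ (because $x_n$ is the only even-indexed element of $W$) supplies the justification the paper only asserts.
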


\begin{proof}
	As established, $V(T_{2n}(W))$ is partitioned into the odd vertices $V_1$ and even vertices $V_2$. Suppose there exists a part $S_l \in \Sigma$ such that $S_l \subseteq V_1$ and $|S_l| > \frac{n}{2}$ (the case for $V_2$ is similar). 
	
	In the subgraph induced by $V_1$, the maximum size of an independent set (a subset where all vertices are at distance at least $2$) is exactly $\frac{n}{2}$. Since $|S_l| > \frac{n}{2}$, by the pigeonhole principle, $S_l$ must contain at least two vertices that are adjacent in $T_{2n}(W)$. According to Lemma \ref{m.1}, any partition containing adjacent vertices of $V_1$ within the same part cannot be a resolving partition. Therefore, $\Sigma$ is not a resolving partition of $V(T_{2n}(W))$. \qed
\end{proof}
%%%%%%%%%%%%%%%%%%%%%%%%%%%%%%%%%%%%%%%%%%%%%%%%%%%%%%%%%%%%%%%%%%%%%%%%%%%%%%%%%

%%%%%%%%%%%%%%%%%===== Propo 3.3=======%%%%%%%%%%%%%%%%%%%%%%%%%%%%%%%%%%%%%%%%%%
\begin{proposition}\label{m.2}
	Let $T_{2n}(W)$ be the Toeplitz graph with $n \ge 4$ even, and let $\Sigma = \{S_1, \dots, S_k\}$ be a resolving partition of $V(T_{2n}(W))$. If $S_l \in \Sigma$ is a maximum independent set of the subgraph induced by $V_1$ (or $V_2$), then all vertices in $V_1 \setminus S_l$ (or $V_2 \setminus S_l$) must belong to distinct singleton parts in $\Sigma$.
\end{proposition}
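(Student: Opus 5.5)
The plan is to compute all distances in $T_{2n}(W)$ explicitly, use them to write down $r(v\mid\Sigma)$ for every $v\in V_1\setminus S_l$, and then show that any part containing such a $v$ together with another vertex forces two vertices to share a representation.

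\textbf{Step 1: the distances.} I would first make the local structure explicit. Inside $V_1$ all index differences are even, so the only edges come from $x_n\in W$. Hence the subgraph induced by $V_1$ is a perfect matching $\{x,x+n\}$ on $n$ vertices, and the same holds for $V_2$. A maximum independent set $S_l$ of $\langle V_1\rangle$ therefore picks exactly one vertex from each matching edge. So $V_1\setminus S_l=\{y' : y\in S_l\}$, where $y'$ denotes the true twin of $y$. Every vertex of $V_1$ is adjacent to all of $V_2$. Consequently, for $v\in V_1$ we have $d(v,u)=1$ if $u\in V_2$ or $u=v'$, and $d(v,u)=2$ for any other $u\in V_1\setminus\{v\}$. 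Thus for any part $P$:
\[
d(v,P)=0 \text{ if } v\in P,\qquad d(v,P)=1 \text{ if } P\cap V_2\neq\emptyset \text{ or } v'\in P,\qquad d(v,P)=2 \text{ otherwise.}
\]

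\textbf{Step 2: vertices of $V_1\setminus S_l$ lie in distinct parts.} Take $v,w\in V_1\setminus S_l$. Their twins $v',w'$ lie in $S_l$, so $d(v,S_l)=d(w,S_l)=1$. For every other part $P$ not containing $v$ or $w$, the formula in Step 1 gives the same value for $v$ and $w$, because neither twin lies in $P$. Hence if $v,w$ were in a common part, then $r(v\mid\Sigma)=r(w\mid\Sigma)$, contradicting that $\Sigma$ is resolving. So the $n/2$ vertices of $V_1\setminus S_l$ occupy $n/2$ pairwise distinct parts, and none of these parts is $S_l$.

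\textbf{Step 3: each such part is a singleton (expected main obstacle).} Suppose $v\in V_1\setminus S_l$ lies in a part $P$ with $|P|\ge 2$. By Step 2 and $P\neq S_l$, the extra vertices of $P$ lie in $V_2$, or possibly in $V_1$ but outside both $S_l$ and $V_1\setminus S_l$, which is impossible. So $P$ contains some $z\in V_2$. I would then compare $v$ with its twin $v'\in S_l$ and with other vertices of $V_1\setminus S_l$, tracking which parts now meet $V_2$. The first attempt is a counting argument on $V_2$. Applying the analysis of Steps 1 and 2 to $V_2$ (which is also a matching of twin pairs, with every vertex adjacent to all of $V_1$), the $V_2$-vertices can be separated only by parts containing their twins. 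Spreading $V_2$ over the parts that already contain the vertices of $V_1\setminus S_l$ should then produce two vertices of $V_2$, or a vertex $z$ and its twin $z'$, whose representations agree coordinate by coordinate. This is the step I expect to be delicate, and it may need a careful case split: $z'\in P$, $z'\in S_l$, or $z'$ in another part meeting $V_1\setminus S_l$. If this case analysis does not close in general, the fallback is to invoke a minimality hypothesis, $|\Sigma|=pd$, to exclude the mixed parts.
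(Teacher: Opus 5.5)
\textbf{The gap is Step~3, and it cannot be closed: the singleton conclusion is false.} Your Steps~1 and~2 are correct. Step~2 is a cleaner form of what the paper argues: the paper shows that two vertices of $V_1\setminus S_l$ in one part would merge the representations of two vertices of $S_l$, while you compare $v$ and $w$ directly. For a counterexample, take $n=4$, $S_l=\{x_1,x_3\}$ and
\[
\Sigma=\{\{x_1,x_3\},\{x_5\},\{x_7,x_2\},\{x_4\},\{x_6\},\{x_8\}\}.
\]
Two vertices in different parts are always separated by the zero coordinate, so only the two non-singleton parts need checking:
\[
d(x_1,\{x_5\})=1\neq 2=d(x_3,\{x_5\}),\qquad d(x_7,\{x_5\})=2\neq 1=d(x_2,\{x_5\}).
\]
So $\Sigma$ is resolving and $S_l$ is a maximum independent set of $\langle V_1\rangle$, yet $x_7\in V_1\setminus S_l$ is not a singleton. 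The same pattern works for every even $n\ge 4$: pair $x_{2n-1}$ with $x_2$ and make every other vertex outside $S_l$ a singleton. Pairs in $S_l$ are separated by the singleton twin of one of them, and $x_{2n-1},x_2$ are separated by $\{x_{n+1}\}$. In particular, the collision you hope to force among $V_2$-vertices never occurs, because the rest of $V_2$ can simply be placed in singletons.

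\textbf{What is actually provable.} Your own Step~1 formula gives the sharp statement. For $y\in S_l$ and a part $Q\neq S_l$, $d(y,Q)=1$ if $Q$ meets $V_2$ or contains $y'$, and $d(y,Q)=2$ otherwise. Hence the vertices of $V_1\setminus S_l$ lie in pairwise distinct parts (your Step~2). Moreover, at most one of them lies in a part meeting $V_2$: if $v$ and $w$ both did, then $r(v'\mid\Sigma)=r(w'\mid\Sigma)$, since every part gives the value $0$, $1$ or $2$ equally to $v'$ and $w'$. Every other vertex of $V_1\setminus S_l$ is then a singleton, since its part lies in $V_1$, cannot meet $S_l$, and contains no second vertex of $V_1\setminus S_l$. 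Your fallback of assuming $|\Sigma|=pd$ changes the hypothesis, so it does not rescue the statement as written. The paper's proof has the same lacuna: it only argues that no part $S_t\neq S_l$ contains two vertices of $V_1\setminus S_l$, and then asserts the singleton conclusion without justification.
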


\begin{proof}
	Let $V_1 = \{x_1, x_3, \dots, x_{2n-1}\}$ and $V_2 = \{x_2, x_4, \dots, x_{2n}\}$. Suppose $S_l \subseteq V_1$ is a part in $\Sigma$ such that all elements in $S_l$ are at pairwise distance $2$ (the case for $V_2$ is analogous). Without loss of generality, let $S_l = \{x_1, x_3, \dots, x_{n-1}\}$. 
	
	We observe that each vertex in $S_l$ has a unique neighbor in $V_1 \setminus S_l$ and $n$ neighbors in $V_2$. If there exists a part $S_t \in \Sigma$ ($t \neq l$) that contains two or more vertices from $V_1 \setminus S_l$, the symmetry of the twin relations in $T_{2n}(W)$ implies that at least two vertices in the part $S_l$ will have identical distance representations relative to $\Sigma$. Specifically, their distance to $S_t$ and other parts would become indistinguishable, contradicting the assumption that $\Sigma$ is a resolving partition. Therefore, every vertex in $V_1 \setminus S_l$ must reside in its own singleton part.\qed
\end{proof}

%%%%%%%%%%%%%%%%%%%%%%%%%%%%%%%%%%%%%%%%%%%%%%%%%%%%%%%%%%%%%%%%%%%%%%%%%%%%%%%%%

\begin{example}\label{m.2.1}
Let $W = \{x_1, x_3, x_5, x_7, x_9, x_{11}\} \cup \{x_6\}$ be the connection set of the Toeplitz graph $T_{12}(W)$ and  $S_l = \{x_1, x_3, x_5\}$ be a part of the partition $\Sigma$. If we define
	\[
	\Sigma = \{ \{x_1, x_3, x_5\}, \{x_7, x_9\}, \{x_{11}\}, \{x_2\}, \{x_4\}, \{x_6\}, \{x_8\}, \{x_{10}\}, \{x_{12}\} \},
	\]
	then $\Sigma$ fails to be a resolving partition. This is because the vertices $x_1$ and $x_3$ result in identical representations:
	\[
	r(x_1|\Sigma) = r(x_3|\Sigma) = (0, 1, 2, 1, 1, 1, 1, 1, 1).
	\]
\end{example}

%%%%%%%%%%%%%%%%%%%%%%%%%%%%%%%%%%%%%%%%%%%%%%%%%%%%%%%%%%%%%%%%%%%%%%%%%%%%%%%%%

\begin{proposition}\label{m.3}
	Let $\Sigma = \{S_1, \dots, S_k\}$ be a resolving partition of $V(T_{2n}(W))$. For $n=4$, suppose there exists a part $S_l \in \Sigma$ with $|S_l| = 4$ such that $S_l$ contains exactly half of the vertices of $V_1$ and half of $V_2$, with each subset being pairwise at distance $2$. Then, all remaining vertices in $V_1 \setminus S_l$ and $V_2 \setminus S_l$ must form singleton parts in $\Sigma$.
\end{proposition}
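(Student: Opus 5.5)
Since $n=4$, the graph $T_8(W)$ is small and completely explicit, so I will prove the statement by a direct case analysis on the eight vertices. Here $W=\{x_1,x_3,x_5,x_7\}\cup\{x_4\}$. The odd differences make every vertex of $V_1=\{x_1,x_3,x_5,x_7\}$ adjacent to every vertex of $V_2=\{x_2,x_4,x_6,x_8\}$. The difference $4$ adds exactly the four edges $x_1x_5$, $x_3x_7$, $x_2x_6$, $x_4x_8$, which form the twin pairs of Lemma \ref{m.1}. Hence $d(u,v)=1$ for $u\in V_1$, $v\in V_2$ or for twins, and $d(u,v)=2$ for non-twin vertices on the same side.

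A set of vertices on one side is pairwise at distance $2$ exactly when it contains one vertex from each twin pair. The automorphisms swapping $x_1\leftrightarrow x_5$, $x_3\leftrightarrow x_7$ (and similarly on $V_2$) let me assume without loss of generality that
\[
S_l=\{x_1,x_3,x_2,x_4\},
\]
so the remaining vertices are $x_5,x_7$ (the twins of $x_1,x_3$) and $x_6,x_8$ (the twins of $x_2,x_4$). I suppose some part $S_t\neq S_l$ contains two of $x_5,x_6,x_7,x_8$. In each resulting case I will exhibit two vertices of $S_l$ with equal representation. Every vertex of $S_l$ has coordinate $0$ at $S_l$ and distance at most $2$ to every other part, so I only need to compare coordinates that are $1$ or $2$.

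\emph{Case 1: $S_t$ contains two vertices of the same side}, say $\{x_5,x_7\}\subseteq S_t$ (the case $\{x_6,x_8\}$ is symmetric). Then $x_1$ and $x_3$ are each at distance $1$ from $S_t$, via their twins. They are also at distance $1$ from every part meeting $V_2\setminus S_l$. Every other part consists only of vertices at distance $1$ from both, so $r(x_1|\Sigma)=r(x_3|\Sigma)$. This is essentially the computation in Example \ref{m.2.1}, and it contradicts resolvability.

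\emph{Case 2: $S_t$ contains one vertex of each side}, say $x_5,x_6\in S_t$ (the other three mixed pairs are handled by the symmetries above). I split by where $x_7$ and $x_8$ lie.
\begin{itemize}
\item If $x_7$ and $x_8$ share a part, then every part other than $S_l$ contains a vertex from each side. Hence $x_1$ and $x_3$ are at distance $1$ from every such part and have identical representations.
\item If $x_7$ or $x_8$ lies in $S_t$, I am back in Case 1.
\item Otherwise $x_7$ and $x_8$ are in separate parts not containing $x_5,x_6$. I compare the cross pair $x_3\in V_1$ and $x_4\in V_2$. Both are at distance $0$ from $S_l$ and distance $1$ from $S_t$. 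Also $d(x_3,x_7)=1$ by twinship and $d(x_4,x_7)=1$ by the bipartite adjacency, while $d(x_3,x_8)=1$ and $d(x_4,x_8)=1$ for the symmetric reasons. So $r(x_3|\Sigma)=r(x_4|\Sigma)$.
\end{itemize}

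Combining the cases, no part other than $S_l$ contains two of the remaining vertices, so each of them forms a singleton part. The main obstacle is the mixed Case 2. There the natural same-side comparison used in Case 1 and Proposition \ref{m.2} does not work, because a singleton $\{x_7\}$ already separates $x_1$ from $x_3$. The key idea I will rely on is comparing a vertex of $V_1$ with a vertex of $V_2$ inside $S_l$, choosing the pair so that each member is the twin-neighbour of exactly one of the two leftover singletons. I will check that sub-case carefully by writing out the full representation vectors.
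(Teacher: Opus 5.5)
Your argument is correct and follows the same route as the paper's proof: a non-singleton part among $x_5,x_6,x_7,x_8$ forces two vertices of $S_l=\{x_1,x_3,x_2,x_4\}$ to share a representation. The difference is that you actually carry out the case split (same-side pair; mixed pair with $x_7,x_8$ together or apart) that the paper only asserts, and your last sub-case is exactly the computation of Example \ref{m.3.1}. One small correction: reducing the four mixed pairs to $\{x_5,x_6\}$ needs automorphisms that fix $S_l$, namely $x_1\leftrightarrow x_3,\ x_5\leftrightarrow x_7$ and $x_2\leftrightarrow x_4,\ x_6\leftrightarrow x_8$, not the twin swaps you cited, which move $S_l$.
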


\begin{proof}
	Let $T_8(W)$ have the vertex set $V_1 \cup V_2$ as defined previously. Without loss of generality, let $S_l = \{x_1, x_3, x_2, x_4\}$. Suppose there exists another part $S_t \in \Sigma$ ($t \neq l$) such that $|S_t| \ge 2$. Due to the adjacency properties of $T_8(W)$, the inclusion of multiple vertices in $S_t$ from the remaining sets $V_1 \setminus S_l = \{x_5, x_7\}$ or $V_2 \setminus S_l = \{x_6, x_8\}$ collapses the distance vectors for pairs of vertices in $S_l$. 
	
	If $S_t$ is not a singleton, we can verify that the representations of two vertices in $S_l$ become identical, violating the resolving property of the partition. Thus, to maintain distinct representations for all vertices, each element in $(V_1 \cup V_2) \setminus S_l$ must be placed in a unique singleton part. \qed
\end{proof}

%%%%%%%%%%%%%%%%%%%%%%%%%%%%%%%%%%%%%%%%%%%%%%%%%%%%%%%%%%%%%
%%%%%%%%%%%%%%%%%===== Examp 3.6=======%%%%%%%%%%%%%%%%%%%%%%%%%%%%%%%%%%%%%%%%%%

\begin{example}\label{m.3.1}
Consider the Toeplitz graph $T_{8}(W)$ with vertex set $\{x_1, x_2,..., x_8\}$ and connection set 
$W = \{x_1, x_3, x_5, x_7\} \cup \{x_4\}$.
If 
$$
\Sigma=\{\{x_1, x_3, x_2, x_4 \}, \{x_5, x_6\}, \{x_7\}, \{x_{8}\} \},
$$ 
is a partition set of $V(T_{8}(W))$, then the partition set $\Sigma$ is not a  resolving partition of $V(T_{8}(W))$, because $r(x_3|\Sigma)=r(x_4|\Sigma)=(0,1,1,1)$.
\end{example}

%%%%%%%%%%%%%%%%%%%%%%%%%%%%%%%%%%%%%%%%%%%%%%%%%%%%%%%%%%%%%%%%%%%%%%%%%%%%%%%%%
%%%%%%%%%%%%%%%%%===== Examp 3.7=======%%%%%%%%%%%%%%%%%%%%%%%%%%%%%%%%%%%%%%%%%%
\begin{example}\label{m.3.2}
Consider the Toeplitz graph $T_{8}(W)$  with vertex set $\{x_1, x_2,..., x_8\}$ and connection set $W = \{x_1, x_3, x_5, x_7\} \cup \{x_4\}$.
If 
$$
\Sigma=\{\{x_1, x_3, x_2, x_4 \}, \{x_5\}, \{x_7\}, \{x_6\}, \{x_{8}\} \},
$$ 
is a partition set of $V(T_{8}(W))$, then $\Sigma$ is a resolving partition  of $V(T_{8}(W))$ with the size $5$, but is not a minimal resolving partition  of $V(T_{8}(W))$. 
\end{example}
%%%%%%%%%%%%%%%%%%%%%%%%%%%%%%%%%%%%%%%%%%%%%%%%%%%%%%%%%%%%%%%%%%%%%%%%%%%%%%%%%
%%%%%%%%%%%%%%%%%===== Propo 3.8=======%%%%%%%%%%%%%%%%%%%%%%%%%%%%%%%%%%%%%%%%%%

\begin{proposition}\label{m.3.3}
	For the Toeplitz graph $T_{2n}(W)$ with even $n \ge 6$, let $\Sigma = \{S_1, ..., S_k\}$ be a  resolving partition of $V(T_{2n}(W))$ into $k$ parts.
	If there exists a part $S_l \in \Sigma$ with $|S_l| = n$ such that:
	\begin{enumerate}
		\item $S_l$ contains half of the elements of $V_1$, so that these elements are at pairwise distance $2$; and
		\item $S_l$ contains half of the elements of $V_2$, so that these elements are also at pairwise distance $2$.
	\end{enumerate} 
	Then other elements of $V_1 - S_l$ and $V_2 - S_l$ must lie in distinct parts of the partition set $\Sigma$.
	
	\begin{proof}
		Let $T_{2n}(W)$ be the Toeplitz graph with vertex set $\{x_1, x_2,..., x_{2n}\}$ and connection set $W = \{x_1, x_3,..., x_{2n-1}\} \cup \{x_n\}$.
		Hence $V(T_{2n}(W))=V_1\cup V_2$, where  $V_1=\{x_1, x_3, ..., x_{2n-1}\}$ and $V_2=\{x_2, x_4, ..., x_{2n}\}$ contain the odd vertices and even vertices, 
		respectively.
		Without loss of generality, we consider the subsets $R_1 = \{x_1, x_3, ..., x_{n-1}\}$ and $R_2 = \{x_2, x_4, ..., x_{n}\}$, 
		and suppose that $S_l \in \Sigma$ is given by $S_l =R_1\cup R_2=\{x_1, x_3, ..., x_{n-1}, x_2, x_4, ..., x_{n}\}$. 
		If there exists another part $S_t \in \Sigma$ (with $S_t \neq S_l$) such that $|S_t|$ contains 2 elements of $V_1 - S_l$  (or $V_2 - S_l$), 
		then there are two vertices in $S_l$ have identical metric representations with respect to $\Sigma$, contradicting the assumption that 
		$\Sigma $ is a resolving partition.
		Thus, other elements of $V_1 - S_l$ and $V_2 - S_l$ must lie in distinct parts of the partition set $\Sigma$. \qed
	\end{proof}
\end{proposition}
%%%%%%%%%%%%%%%%%%%%%%%%%%%%%%%%%%%%%%%%%%%%%%%%%%%%%%%%%%%%%%%%%%%%%%%%%%%%%%%%%
%%%%%%%%%%%%%%%%%===== Propo 3.9=======%%%%%%%%%%%%%%%%%%%%%%%%%%%%%%%%%%%%%%%%%%

\begin{proposition}\label{m.4}
	Consider the Toeplitz graph $T_{2n}(W)$. If  $n\geq 4$ is an even integer, then $T_{2n}(W)$ has a resolving partition of size $n+1$.
	
	\begin{proof}
		Let $T_{2n}(W)$ be the Toeplitz graph with vertex set $\{x_1, x_2,..., x_{2n}\}$ and connection set $W = \{x_1, x_3,..., x_{2n-1}\} \cup \{x_n\}$.
		Hence $V(T_{2n}(W))=V_1\cup V_2$, where  $V_1=\{x_1, x_3, ..., x_{2n-1}\}$ and $V_2=\{x_2, x_4, ..., x_{2n}\}$ contain the odd vertices and even vertices, 
		respectively. Also,
		let $\Sigma = \{S_1, ..., S_k\}$ be a resolving partition of $V(T_{2n}(W))$ so that 
		there exists a part $S_l \in \Sigma$ with $|S_l| = n$ such that:
		\begin{enumerate}
			\item $S_l$ contains half of the elements of $V_1$, so that these elements are at pairwise distance $2$; and
			\item $S_l$ contains half of the elements of $V_2$, so that these elements are also at pairwise distance $2$.
		\end{enumerate} 
		Now, if the other elements of $V_1 - S_l$ and $V_2 - S_l$  belong to single-membered parts of $\Sigma$, then $\Sigma$ is a
		resolving partition of $T_{2n}(W)$ of size $n+1$. \qed
	\end{proof}
\end{proposition}
%%%%%%%%%%%%%%%%%%%%%%%%%%%%%%%%%%%%%%%%%%%%%%%%%%%%%%%%%%%%%%%%%%%%%%%%%%%%%%%%%%
%%%%%%%%%%%%%%%%%===== Examp 3.10=======%%%%%%%%%%%%%%%%%%%%%%%%%%%%%%%%%%%%%%%%%%
\begin{example}\label{m.4.1}
	Consider the Toeplitz graph $T_{12}(W)$ with vertex set $\{x_1, x_2, ..., x_{12}\}$ and connection set 
	$W = \{x_1, x_3, x_5, x_7, x_9, x_{11}\} \cup \{x_6\}$.
	Although  the partition set:
	$$
	\Sigma=\{\{x_1, x_3, x_{5}, x_2, x_4,  x_{6} \}, \{x_7\}, \{x_9\}, \{x_{11}\}, \{x_8\}, \{x_{10}\}, \{x_{12}\} \},
	$$  
	is a resolving partition  of $V(T_{12}(W))$ with the cardinality $7$, but is not a minimal resolving partition  of $V(T_{12}(W))$.
\end{example}
%%%%%%%%%%%%%%%%%%%%%%%%%%%%%%%%%%%%%%%%%%%%%%%%%%%%%%%%%%%%%%%%%%%%%%%%%%%%%%%%%%

%%%%%%%%%%%%%%%%%===== Theor 3.11=======%%%%%%%%%%%%%%%%%%%%%%%%%%%%%%%%%%%%%%%%%%
\begin{theorem}\label{m.5}
	For the Toeplitz graph $T_{2n}(W)$ with $n \ge 4$ even, any resolving partition $\Sigma = \{S_1, \dots, S_k\}$ of $V(T_{2n}(W))$ satisfies 
	\[
	k \geq \lceil 2\sqrt{n-1} \rceil.
	\]
\end{theorem}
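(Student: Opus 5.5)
The plan is to replace the informal case analysis of Propositions~\ref{m.2}--\ref{m.3.3} with an explicit description of the graph and a counting argument. First I would record the structure exactly. Two vertices $x_i,x_j$ are adjacent iff $|i-j|$ is odd or $|i-j|=n$. Since $n$ is even, this means every vertex of $V_1$ is adjacent to every vertex of $V_2$, and inside $V_1$ (resp.\ $V_2$) the only edges are $x_i x_{i+n}$. So $T_{2n}(W)$ is $K_{n,n}$ plus a perfect matching on each side, it has diameter $2$, and its $n$ true-twin pairs are exactly the pairs $\{x_i,x_{i+n}\}$. Consequently every representation $r(v\mid\Sigma)$ lies in $\{0,1,2\}^k$ with exactly one $0$. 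By Lemma~\ref{m.1}, the two vertices of each twin pair lie in different parts.

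Next I would compute representations explicitly. Split the parts into three types: $p$ parts contained in $V_1$, $q$ parts contained in $V_2$, and $m$ mixed parts, so that $k=p+q+m$. For $x\in V_1$ with twin $x'$, and for a part $C$ not containing $x$, we have $d(x,C)=1$ if $C$ meets $V_2$ or $x'\in C$, and $d(x,C)=2$ otherwise. Hence two vertices of $V_1$ lying in the same part $A$ can differ only in the coordinates indexed by pure-$V_1$ parts, and only through where their twins lie. This gives the key combinatorial constraint. Within a fixed part $A$, at most one $V_1$-vertex has its twin in a part meeting $V_2$. All other $V_1$-vertices of $A$ have twins in pairwise distinct pure-$V_1$ parts other than $A$. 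Summing over the $p+m$ parts that meet $V_1$ bounds $|V_1|=n$ by a function of $p$ and $m$, roughly $n\le (p+m)+(\text{number of admissible ordered pairs of parts})$. By symmetry the same kind of bound holds for $V_2$ in terms of $q$ and $m$.

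To sharpen the count, I would count twin pairs rather than vertices. Each twin pair with both ends in pure-$V_1$ parts occupies an unordered pair of distinct pure-$V_1$ parts, and the rule above limits how many such pairs can share a part. A twin pair with an end in a mixed part uses up that part's single ``free'' slot on the $V_1$ side. The goal of this step is an inequality of the shape $n-1\le (p+\tfrac m2)(q+\tfrac m2)$, or separate bounds whose product yields it. Then AM--GM gives $n-1\le \bigl(\tfrac{p+q+m}{2}\bigr)^2=\tfrac{k^2}{4}$, i.e.\ $k\ge 2\sqrt{n-1}$, and integrality of $k$ gives the ceiling.

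The main obstacle is this last optimisation. The naive per-side bounds $n\le (p+m)(p+1)$ and $n\le (q+m)(q+1)$ give only $k\gtrsim \sqrt{4n+1}-1$ when $m=0$, which is slightly weaker than $\sqrt{4n-4}$. So I expect the counting has to exploit the fact that each twin pair inside pure parts is counted from both of its endpoints, and that mixed parts contribute to both sides simultaneously. My first attempt would be a careful double count of ordered pairs $(A,B)$ of parts with a twin pair split between them. If that still falls short for small $n$, I would fall back to a direct finite check for $n=4,6$ and use the asymptotic count for larger $n$.
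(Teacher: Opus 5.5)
You have the structure right: the graph is $K_{n,n}$ plus a perfect matching on each side, the representation formula is correct, and so is your local constraint (in a part $A$, at most one $V_1$-vertex has its twin in a mixed part, and the others have twins in pairwise distinct pure-$V_1$ parts other than $A$). But the proof is not finished. The inequality you need at the end, $n-1\le (p+\frac m2)(q+\frac m2)$, appears only as a goal, and the double count meant to produce it is never specified. Your fallback of a finite check for $n=4,6$ plus an ``asymptotic count'' cannot give the exact bound $k\ge\lceil 2\sqrt{n-1}\rceil$ for every even $n$. So there is a genuine gap at the decisive step.

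The missing step is already contained in your own constraint; you abandoned per-side bounds because you summed it with an overcount. The bound $n\le (p+m)(p+1)$ lets every part meeting $V_1$ hold $p+1$ vertices of $V_1$. For a pure-$V_1$ part $A$, however, the twins must lie in pure-$V_1$ parts \emph{other than} $A$, so $A$ holds at most $(p-1)+1=p$ such vertices. A mixed part holds at most $p+1$. This gives
\[
n\le p^2+m(p+1),
\]
which is exactly your ``$(p+m)$ plus admissible ordered pairs'' count, and is smaller than your naive bound by $p$. It suffices. The quadratic $p^2+mp+(m-n)$ has discriminant $(m-2)^2+4(n-1)$ and its smaller root is negative, so $p\ge\frac12\bigl(-m+\sqrt{(m-2)^2+4(n-1)}\bigr)$. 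The same holds for $q$ by symmetry. Hence $k=p+q+m\ge\sqrt{(m-2)^2+4(n-1)}\ge 2\sqrt{n-1}$, and integrality gives the ceiling. This is precisely the paper's argument.

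Your product inequality does follow from these two bounds, since each factor $p+\frac m2$, $q+\frac m2$ is at least $\frac12\sqrt{(m-2)^2+4(n-1)}$. So the AM--GM route can be salvaged, but it is unnecessary: adding the two per-side bounds finishes the proof with no separate treatment of small $n$.
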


\begin{proof}
	Recall that $V(T_{2n}(W)) = V_1 \cup V_2$, where $V_1$ and $V_2$ denote the sets of odd and even vertices, respectively. Let $\Sigma = \{S_1, \dots, S_k\}$ be a resolving partition. We categorize the parts of $\Sigma$ into three types:
	\begin{enumerate}
		\item \textbf{Odd parts:} $r$ parts containing only vertices from $V_1$.
		\item \textbf{Even parts:} $s$ parts containing only vertices from $V_2$.
		\item \textbf{Mixed parts:} $m$ parts containing vertices from both $V_1$ and $V_2$.
	\end{enumerate}
	Thus, $r + s + m = k$. By Lemma \ref{m.1}, adjacent vertices in $V_1$ (or $V_2$) must reside in distinct parts. For any odd vertex $x \in V_1$, its distance to a part $S_i \in \Sigma$ is given by:
	\[
	d(x, S_i) = 
	\begin{cases} 
	0 & \text{if } x \in S_i, \\
	1 & \text{if } S_i \cap V_2 \neq \emptyset \text{ or } \text{twin}(x) \in S_i, \\
	2 & \text{otherwise.}
	\end{cases}
	\]
	The representation vector $r(x|\Sigma)$ for $x \in V_1$ is determined by the part containing $x$ and the part containing its twin. Note that if the twin of $x$ lies in a mixed part, $d(x, S_i)=1$ is already guaranteed by the presence of even vertices in that part. Therefore, distinct vectors are only generated based on whether the twin lies in an odd part or a mixed part.
	
	The number of distinct representation vectors for the $n$ vertices in $V_1$ is constrained as follows:
	\begin{itemize}
		\item If $x$ is in an odd part ($r$ choices), its twin can be in any of the other $r-1$ odd parts or any mixed part. This yields at most $r \times ( (r-1) + 1 ) = r^2$ possible vectors.
		\item If $x$ is in a mixed part ($m$ choices), its twin can be in any of the $r$ odd parts or another mixed part (treated as a single case for distance purposes), yielding at most $m \times (r + 1)$ possible vectors.
	\end{itemize}
	Summing these, the number of distinct vectors for $V_1$ must satisfy:
	\begin{equation}\label{eq:n_bound_r}
		n \leq r^2 + m(r+1).
	\end{equation}
	By symmetry, a similar bound holds for the $n$ vertices in $V_2$:
	\begin{equation}\label{eqs}
		n \leq s^2 + m(s+1).
	\end{equation}
	To find the lower bound for $k$, we solve the quadratic inequality $r^2 + mr + (m - n) \geq 0$ from \eqref{eq:n_bound_r}. The discriminant is $\Delta = (m-2)^2 + 4(n-1) > 0$. The positive root is:
	\[
	r_0 = \frac{-m + \sqrt{(m-2)^2 + 4(n-1)}}{2}.
	\]
	The inequality holds for $r \geq r_0$. By symmetry, \eqref{eqs} implies $s \geq r_0$. Thus:
	\[
	r + s \geq 2r_0 = -m + \sqrt{(m-2)^2 + 4(n-1)}.
	\]
	Substituting this into the expression for $k$:
	\[
	k = r + s + m \geq \sqrt{(m-2)^2 + 4(n-1)}.
	\]
	The right-hand side is minimized when $m = 2$, leading to:
	\[
	k \geq \sqrt{4(n-1)} = 2\sqrt{n-1}.
	\]
	Since $k$ must be an integer, we conclude $k \geq \lceil 2\sqrt{n-1} \rceil$. \qed
\end{proof}

%%%%%%%%%%%%%%%%%%%%%%%%%%%%%%%%%%%%%%%%%%%%%%%%%%%%%%%%%%%%%%%%%%%%%%%%%%%%%%%%%%
%%%%%%%%%%%%%%%%%===== Propo 3.12=======%%%%%%%%%%%%%%%%%%%%%%%%%%%%%%%%%%%%%%%%%%
\begin{proposition}\label{m.5.1}
	For $T_8(W)$ with connection set $W = \{x_1, x_3, x_5, x_7\} \cup \{x_4\}$, the partition dimension is $pd(T_8(W)) = 4$.
\end{proposition}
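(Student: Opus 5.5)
The plan is to prove the two inequalities $pd(T_8(W))\ge 4$ and $pd(T_8(W))\le 4$ separately: the first from Theorem \ref{m.5}, the second by an explicit resolving partition with four parts.

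\textbf{Structure of $T_8(W)$.} Two vertices $x_i,x_j$ are adjacent iff $|i-j|\in\{1,3,5,7,4\}$. Hence every odd vertex is adjacent to every even vertex. Inside $V_1$ the only edges are $x_1x_5$ and $x_3x_7$, and inside $V_2$ the only edges are $x_2x_6$ and $x_4x_8$. So $T_8(W)$ is $K_{4,4}$ together with a perfect matching on each side, and the matched pairs are exactly the true-twin pairs. The diameter is $2$. For a vertex $x$ and a part $S$ not containing $x$, this gives the rule I will use throughout:
\[
d(x,S)=1 \iff S \text{ contains a vertex of the opposite parity or the twin of } x,
\]
and $d(x,S)=2$ otherwise.

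\textbf{Lower bound.} Apply Theorem \ref{m.5} with $n=4$: since $2\sqrt{3}\approx 3.46$, every resolving partition has at least $\lceil 2\sqrt 3\rceil=4$ parts.

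If I prefer not to lean entirely on that counting argument, I would add a direct check that $3$ parts are impossible. By Lemma \ref{m.1}, each of the four twin pairs must be split. Any part containing an even vertex is at distance at most $1$ from every odd vertex outside it. So two vertices of one parity lying in the same part can only be separated by a part of the other parity class's type that is pure (contains no vertex of the first parity) and contains the twin of exactly one of them. With only three parts, a short case analysis on how many parts are mixed should show this separating structure cannot be supplied for both parities at once.

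\textbf{Upper bound.} I would exhibit
\[
\Sigma=\{S_1,S_2,S_3,S_4\},\quad S_1=\{x_1,x_3,x_8\},\ S_2=\{x_5\},\ S_3=\{x_7,x_2,x_4\},\ S_4=\{x_6\}.
\]
Using the distance rule above, the representations come out as follows.
\begin{itemize}
\item In $S_1$: $r(x_1|\Sigma)=(0,1,1,1)$, $r(x_3|\Sigma)=(0,2,1,1)$ and $r(x_8|\Sigma)=(0,1,1,2)$.
\item In $S_2$: $r(x_5|\Sigma)=(1,0,1,1)$.
\item In $S_3$: $r(x_7|\Sigma)=(1,2,0,1)$, $r(x_2|\Sigma)=(1,1,0,1)$ and $r(x_4|\Sigma)=(1,1,0,2)$.
\item In $S_4$: $r(x_6|\Sigma)=(1,1,1,0)$.
\end{itemize}
Vertices in different parts already differ in the position of their $0$ entry. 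Inside each part the vectors listed are pairwise distinct, so $\Sigma$ is resolving and $pd(T_8(W))\le 4$.

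\textbf{Main obstacle.} The delicate part is the construction rather than the verification. Mixed parts give distance $1$ to everything, so two same-parity vertices sharing a part can only be told apart by a pure singleton holding exactly one of their twins; here $S_2=\{x_5\}$ separates $x_1,x_3$, and $S_4=\{x_6\}$ separates $x_2,x_4$ (and $x_8$ from both). The remaining twins must be distributed so that no twin pair shares a part, as Lemma \ref{m.1} requires. Examples \ref{m.3.1} and \ref{m.3.2} show that the more natural choices fail or use too many parts, so I would present this partition and its table of representations explicitly.
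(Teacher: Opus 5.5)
Your proof is correct and follows the paper's route. The lower bound comes from Theorem~\ref{m.5}: you plug in $\lceil 2\sqrt{3}\rceil = 4$ directly, while the paper reruns the counting inequalities $4 \le r^2+m(r+1)$, $4 \le s^2+m(s+1)$ over all $r+s+m=3$. The upper bound comes from an explicit four-part partition; yours differs from the paper's $\{\{x_1\},\{x_2\},\{x_3,x_4,x_5\},\{x_6,x_7,x_8\}\}$, but your table of representations checks out.

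One small slip in your optional aside: a part separating two same-parity vertices must contain no vertex of the \emph{opposite} parity, since it holds a twin of one of them. Your final paragraph uses it correctly this way.
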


\begin{proof}
	Let $V(T_8(W)) = V_1 \cup V_2$, where $V_1 = \{x_1, x_3, x_5, x_7\}$ and $V_2 = \{x_2, x_4, x_6, x_8\}$. We first demonstrate that no partition of size $k=3$ can be a resolving partition. Note that $T_8(W)$ contains four pairs of true twins: $(x_1, x_5)$, $(x_3, x_7)$, $(x_2, x_6)$, and $(x_4, x_8)$. By Lemma \ref{m.1}, true twins must belong to distinct parts in any resolving partition.
	
	Suppose $\Sigma = \{S_1, S_2, S_3\}$ is a resolving partition. Let $r, s, m$ denote the number of odd, even, and mixed parts, respectively, such that $r + s + m = 3$. From Theorem \ref{m.5}, the following bounds must hold for the four vertices in $V_1$ and $V_2$:
	\begin{equation}\label{eq:t8_v1}
		4 \leq r^2 + m(r+1),
	\end{equation}
	\begin{equation}\label{eq:t8_v2}
		4 \leq s^2 + m(s+1).
	\end{equation}
	Testing all non-negative integer triples $(r, s, m)$ where $r + s + m = 3$:
	\begin{itemize}
		\item \textbf{Case $m=0$:} $r+s=3$. Conditions \eqref{eq:t8_v1} and \eqref{eq:t8_v2} require $r \geq 2$ and $s \geq 2$, implying $r+s \geq 4$, a contradiction.
		\item \textbf{Case $m=1$:} $r+s=2$. If $r=0$ or $r=1$, \eqref{eq:t8_v1} fails ($1 \ngeq 4$ and $3 \ngeq 4$). If $r=2$, then $s=0$, which violates \eqref{eq:t8_v2}.
		\item \textbf{Case $m=2$:} $r+s=1$. If $r=0$, \eqref{eq:t8_v1} yields $2 \ngeq 4$. If $s=0$, \eqref{eq:t8_v2} yields $2 \ngeq 4$.
		\item \textbf{Case $m=3$:} $r=s=0$. Both \eqref{eq:t8_v1} and \eqref{eq:t8_v2} yield $3 \ngeq 4$, a contradiction.
	\end{itemize}
	Thus, $pd(T_8(W)) > 3$. Conversely, the partition $\Sigma = \{ \{x_1\}, \{x_2\}, \{x_3, x_4, x_5\}, \{x_6, x_7, x_8\} \}$ is a resolving partition of size 4. Hence, $pd(T_8(W)) = 4$. \qed
\end{proof}

\begin{proposition}\label{m.5.2}
	For the Toeplitz graph $T_{12}(W)$ with $W = \{x_1, x_3, \dots, x_{11}\} \cup \{x_6\}$, $pd(T_{12}(W)) = 5$.
\end{proposition}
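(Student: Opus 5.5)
The plan has two halves: a lower bound from Theorem \ref{m.5} and an explicit resolving partition with five parts.

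\textbf{Lower bound.} For $n=6$, Theorem \ref{m.5} gives
\[
k \ge \lceil 2\sqrt{5}\,\rceil = \lceil 4.47\ldots \rceil = 5 .
\]
If one prefers not to rely on the continuous optimisation in that proof, I would repeat the integer case check of Proposition \ref{m.5.1}. That means checking all triples $(r,s,m)$ with $r+s+m=4$ against
\[
6 \le r^2+m(r+1) \quad\text{and}\quad 6 \le s^2+m(s+1).
\]
Each case fails quickly:
\begin{itemize}
\item $m=0$ needs $r,s\ge 3$.
\item $m=1$ needs $r,s\ge 2$, so $r+s\ge 4>3$.
\item $m=2$ needs $r,s\ge 1$ and then $r=s=1$, but $1+2\cdot 2=5<6$.
\item $m=3$ needs $r,s\ge 1$, impossible since $r+s=1$.
\item $m=4$ gives $r=s=0$, and $4<6$.
\end{itemize}
So $pd(T_{12}(W))\ge 5$.

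\textbf{Structure of $T_{12}(W)$.} Two vertices are adjacent exactly when their indices differ by an odd number or by $6$. So every odd vertex is adjacent to every even vertex. Inside $V_1$ (and inside $V_2$) the only edges are the twin pairs
\[
(x_1,x_7),\ (x_3,x_9),\ (x_5,x_{11}),\ (x_2,x_8),\ (x_4,x_{10}),\ (x_6,x_{12}).
\]
Hence two non-twin vertices on the same side are at distance $2$. For a vertex $x$ and a part $S$ not containing it, $d(x,S)=1$ if $S$ contains a vertex of the opposite parity or the twin of $x$, and $d(x,S)=2$ otherwise. This is the same distance formula used in the proof of Theorem \ref{m.5}.

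\textbf{Upper bound.} I would use $r=s=2$ and $m=1$, placing twins so that within each part the twin locations differ. Take
\[
\Sigma=\bigl\{A=\{x_1,x_5\},\ B=\{x_7,x_9\},\ C=\{x_2,x_6\},\ D=\{x_8,x_{10}\},\ M=\{x_3,x_{11},x_4,x_{12}\}\bigr\}.
\]
No part contains a twin pair, as Lemma \ref{m.1} requires. Using the distance formula, the representations in the order $(A,B,C,D,M)$ are:
\begin{itemize}
\item $r(x_1|\Sigma)=(0,1,1,1,1)$ and $r(x_5|\Sigma)=(0,2,1,1,1)$.
\item $r(x_7|\Sigma)=(1,0,1,1,1)$ and $r(x_9|\Sigma)=(2,0,1,1,1)$.
\item $r(x_2|\Sigma)=(1,1,0,1,1)$ and $r(x_6|\Sigma)=(1,1,0,2,1)$.
\item $r(x_8|\Sigma)=(1,1,1,0,1)$ and $r(x_{10}|\Sigma)=(1,1,2,0,1)$.
\item $r(x_3|\Sigma)=(2,1,1,1,0)$ and $r(x_{11}|\Sigma)=(1,2,1,1,0)$.
\item $r(x_4|\Sigma)=(1,1,2,1,0)$ and $r(x_{12}|\Sigma)=(1,1,1,2,0)$.
\end{itemize}
Vertices in different parts are separated by the position of the $0$. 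Within each part the listed vectors are pairwise distinct, so $\Sigma$ is a resolving partition of size $5$.

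\textbf{Main obstacle.} The delicate point is the construction, not the bound. The counting in Theorem \ref{m.5} shows that an odd part can hold at most one vertex per possible twin location (another odd part, or the mixed part). The mixed part must hold odd and even vertices whose patterns differ in the odd coordinates versus the even coordinates. Tabulating all twelve vectors, as above, verifies this directly. Combining the two bounds gives $pd(T_{12}(W))=5$.
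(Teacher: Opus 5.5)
Your proposal is correct and follows essentially the same route as the paper. The lower bound comes from the $(r,s,m)$ counting behind Theorem \ref{m.5}, and your integer case check for $r+s+m=4$ is exactly the Proposition \ref{m.5.1} methodology the paper invokes. The upper bound is an explicit five-part partition with two odd parts, two even parts and one mixed part; yours differs from the paper's $\{\{x_1,x_2,x_3,x_4\},\{x_7,x_{11}\},\{x_5,x_9\},\{x_8,x_{12}\},\{x_6,x_{10}\}\}$ but has the same shape, and your table of representations is correct, which is more verification than the paper provides.
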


\begin{proof}
	Following the methodology used in Proposition \ref{m.5.1}, it can be shown that no partition of $V(T_{12}(W))$ into 4 parts satisfies the resolving property. However, the partition $\Sigma = \{ \{x_1, x_2, x_3, x_4\}, \{x_7, x_{11}\}, \{x_5, x_9\}, \{x_8, x_{12}\}, \{x_6, x_{10}\} \}$ is a resolving partition of size 5. Therefore, $pd(T_{12}(W)) = 5$.\qed
\end{proof}

\begin{proposition}\label{e.1}
	For any even integer $n \ge 4$, the $1$-domination number of $T_{2n}(W)$ is $\gamma_1(T_{2n}(W)) = 2$.
\end{proposition}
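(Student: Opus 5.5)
To show $\gamma_1(T_{2n}(W)) = 2$, I will prove two inequalities: an upper bound by explicit construction, and a lower bound ruling out a single dominating vertex.

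\textbf{Adjacencies.} I first record the adjacency structure directly from the definition of $T_{2n}(W)$. Two vertices $x_i, x_j$ are adjacent exactly when $|i-j|$ is odd or $|i-j| = n$. Since $n$ is even, an odd difference means $x_i$ and $x_j$ have opposite parity. Hence every odd vertex is adjacent to every even vertex. Within $V_1$ (and likewise within $V_2$), the only edges join $x_i$ and $x_{i+n}$. So the subgraph induced by $V_1$ is a perfect matching on $n$ vertices, and the same holds for $V_2$. Each vertex has degree $n+1$, consistent with the regularity stated in the introduction.

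\textbf{Upper bound.} I take $D = \{x_1, x_2\}$. Every even vertex other than $x_2$ is adjacent to $x_1$, because $x_1$ is odd. Every odd vertex other than $x_1$ is adjacent to $x_2$. Thus every vertex outside $D$ has a neighbour in $D$, so $D$ is a dominating set and $\gamma_1(T_{2n}(W)) \le 2$.

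\textbf{Lower bound.} It remains to exclude a dominating set of size $1$. This amounts to showing there is no universal vertex. A single vertex $\{v\}$ dominates only if $\deg(v) = 2n-1$. Here every vertex has degree $n+1$, and $n+1 < 2n-1$ for $n \ge 4$ (indeed for $n > 2$). Concretely, for an odd vertex $v = x_i$, any odd vertex $x_j \ne x_i, x_{i+n}$ is not adjacent to $x_i$. Such a vertex exists since $|V_1| = n \ge 4$. The even case is symmetric. The empty set clearly does not dominate a nonempty graph. Therefore $\gamma_1 \ge 2$, and combining the two bounds gives equality.

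I expect no real obstacle. The only step needing care is the parity bookkeeping: an odd difference always connects $V_1$ to $V_2$, while the difference $n$ (even) stays within a parity class. I would verify this explicitly, with indices taken in $\{1,\dots,2n\}$ and $|i-j| < 2n$, so that "adjacent within $V_1$" means exactly $|i-j| = n$.
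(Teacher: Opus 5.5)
Your proof is correct and follows essentially the same route as the paper: for the lower bound, each vertex has only one neighbour in its own parity class, so no vertex is universal; for the upper bound, one odd and one even vertex together dominate the graph. Your degree count $n+1<2n-1$ and your explicit parity bookkeeping make the paper's brief lower-bound sentence more precise.
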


\begin{proof}
	In $T_{2n}(W)$, each vertex in $V_1$ (or $V_2$) is adjacent to exactly one other vertex in the same set. Consequently, no single vertex can dominate the entire graph, implying $\gamma_1(T_{2n}(W)) \ge 2$. Now, let $D = \{x_i, x_j\}$ where $x_i \in V_1$ and $x_j \in V_2$. Since every vertex in $V \setminus D$ is adjacent to at least one element in $D$ (specifically, odd vertices are adjacent to $x_j$ and even vertices to $x_i$), $D$ is a $1$-dominating set. Thus, $\gamma_1(T_{2n}(W)) = 2$.\qed
\end{proof}

\begin{theorem}\label{e.2}
	For the Toeplitz graph $T_{2n}(W)$ with $n \ge 4$ even and $1 < k \le \frac{n}{2}-1$, the $k$-domination number is $\gamma_k(T_{2n}(W)) = 2k$.
\end{theorem}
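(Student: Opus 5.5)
The plan is to make the structure of $T_{2n}(W)$ explicit and then do a direct counting argument, proving the upper bound by construction and the lower bound by contradiction. Two vertices $x_i,x_j$ are adjacent exactly when $|i-j|$ is odd or $|i-j|=n$. Since $n$ is even, this means:
\begin{itemize}
\item every vertex of $V_1$ is adjacent to every vertex of $V_2$;
\item inside $V_1$ (and likewise inside $V_2$) the only edges form the perfect matching $x_i \sim x_{i\pm n}$.
\end{itemize}
So $T_{2n}(W)$ is $K_{n,n}$ with a perfect matching added on each side. Every vertex $v$ has a unique neighbour $v'$ on its own side, which I call its partner, in the sense used in Lemma \ref{m.1} and Proposition \ref{e.1}.

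\textbf{Upper bound.} Choose any $k$ vertices of $V_1$ and any $k$ vertices of $V_2$, and let $D$ be their union, so $|D|=2k$; this is possible since $k\le n$. A vertex of $V_1\setminus D$ is adjacent to all $k$ chosen vertices of $V_2$, and symmetrically for $V_2\setminus D$. Hence $D$ is $k$-dominating and $\gamma_k(T_{2n}(W))\le 2k$.

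\textbf{Lower bound.} Suppose $D$ is $k$-dominating with $|D|\le 2k-1$. Put $a=|D\cap V_1|$ and $b=|D\cap V_2|$, so $a+b\le 2k-1$. Without loss of generality $a\le k-1$.
\begin{itemize}
\item Take $v\in V_2\setminus D$. Its neighbours in $D$ are the $a$ vertices of $D\cap V_1$ together with at most one more, namely its partner $v'$ if $v'\in D$. So $v$ has at most $a+1\le k$ neighbours in $D$.
\item The requirement of $k$ neighbours therefore forces $a=k-1$, and forces $v'\in D$ for every $v\in V_2\setminus D$.
\item The partner map is injective, so it sends $V_2\setminus D$ injectively into $D\cap V_2$. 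Hence $n-b\le b$, that is, $b\ge n/2$. This also covers the case $V_2\setminus D=\emptyset$, where $b=n$.
\item Using the hypothesis $k\le \frac n2-1$, we get $|D|=a+b\ge (k-1)+\frac n2\ge (k-1)+(k+1)=2k$.
\end{itemize}
This contradicts $|D|\le 2k-1$. Thus $\gamma_k(T_{2n}(W))=2k$.

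The only delicate step is the lower bound. The key points there are the "at most one extra neighbour on the same side" observation and the injectivity of the partner map. This is also exactly where the hypothesis $k\le \frac n2-1$ is needed: for larger $k$, a set with $b\approx n/2$ could beat $2k$. The condition $k>1$ is not used, since the case $k=1$ is Proposition \ref{e.1}.
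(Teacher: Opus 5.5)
Your proof is correct. It follows the same two-step skeleton as the paper: a $k+k$ construction for the upper bound, and the ``one neighbour on its own side'' observation for the lower bound.

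The difference is in the lower bound, where yours is a genuine argument and the paper's is only an assertion. The paper simply states that any $D$ with $|D|\le 2k-1$ leaves some vertex of $V\setminus D$ with fewer than $k$ neighbours in $D$, and it never invokes the hypothesis $k\le \frac n2-1$. Your partner-injectivity step ($V_2\setminus D$ maps injectively into $D\cap V_2$, so $b\ge n/2$) is exactly what makes that hypothesis bite. It also shows the hypothesis is sharp: for $k=\frac n2$, take $k-1$ vertices of $V_1$ together with one vertex from each matched pair of $V_2$. This is $k$-dominating of size $n-1<2k$, so the paper's bare claim would be false there.

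On the upper bound, you are also right that the paper's requirement that $R_1,R_2$ consist of vertices at pairwise distance $2$ is unnecessary.

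One small repair is needed. When $V_2\setminus D=\emptyset$, there is no vertex $v$ to apply your count to, so $a=k-1$ is not forced. The final chain $|D|\ge (k-1)+\frac n2$ therefore does not literally apply in that case. But then $|D|\ge b=n\ge 2k+2$ directly, so the conclusion stands.
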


\begin{proof}
	As each vertex in $V_1$ (or $V_2$) has exactly one neighbor in its own set, any subset $D \subset V$ with $|D| \le 2k-1$ fails to provide $k$ neighbors for at least one vertex in $V \setminus D$. Thus, $\gamma_k(T_{2n}(W)) \ge 2k$. 
	
	To show the upper bound, let $R_1 \subset V_1$ and $R_2 \subset V_2$ be subsets such that $|R_1| = |R_2| = k$, where vertices within each subset are at pairwise distance 2. Let $D = R_1 \cup R_2$. Every vertex in $V \setminus D$ is adjacent to all $k$ vertices in the opposite set. Therefore, every vertex in $V \setminus D$ has at least $k$ neighbors in $D$, making $D$ a $k$-dominating set of size $2k$. We conclude that $\gamma_k(T_{2n}(W)) = 2k$. \qed
\end{proof}
%%%%%%%%%%%%%%%%%%%%%%%%%%%%%%%%%%%%%%%%%%%%%%%%%%%%%%%%%%%%%%%%%%%%%%%%%%%%%%%%%
%%%%%%%%%%%%%%%%%===== Conclusion=======%%%%%%%%%%%%%%%%%%%%%%%%%%%%%%%%%%%%%%%%%%
\section{Conclusion}

In this paper, we investigated the resolving partition properties and domination parameters of the Toeplitz graph $T_{2n}(W)$ with vertex set $\{x_1, x_2, \dots, x_{2n}\}$ and connection set $W = \{x_1, x_3, \dots, x_{2n-1}\} \cup \{x_n\}$. Our main contributions are summarized as follows:

\begin{enumerate}
	\item We established a lower bound for the partition dimension of this graph family, proving that $pd(T_{2n}(W)) \geq \lceil 2\sqrt{n-1} \rceil$.
	\item We determined the $k$-domination number for $T_{2n}(W)$, showing that $\gamma_k(T_{2n}(W)) = 2k$ for the specified ranges of $k$.
\end{enumerate}

These results contribute to the structural understanding of non-distance-regular graphs and provide a foundation for further exploration of other resolving parameters within this class of Toeplitz graphs.
%%%%%%%%%%%%%%%%%%%%%%%%%%%%%%%%%%%%%%%%%%%%%%%%%%%%%%%%%%%%%%%%%%%%%%%%%%%%%%%%%
%%%%%%%%%%%%%%%%%===== Funding=======%%%%%%%%%%%%%%%%%%%%%%%%%%%%%%%%%%%%%%%%%%%%
\section*{Funding and Conflict of interest} 
The authors have not disclosed any funding and declare no conflict of interest.
%%%%%%%%%%%%%%%%%%%%%%%%%%%%%%%%%%%%%%%%%%%%%%%%%%%%%%%%%%%%%%%%%%%%%%%%%%%%%%%%%
%%%%%%%%%%%%%%%%%===== Refrences=======%%%%%%%%%%%%%%%%%%%%%%%%%%%%%%%%%%%%%%%%%%

\end{document}